\newtheorem{theorem}{Theorem}
\newtheorem{lemma}{Lemma}
\newtheorem{definition}{Definition}
\providecommand{\keywords}[1]
{\small	
  \textbf{\textit{Keywords---}} #1}
\newcommand\pint[2]{{\big\langle #1, #2 \big\rangle}}
\title{Isoperimetric inequalities for eigenvalues of the Laplacian on cycles with fixed resistance metric} 
\author{Federico Men\'endez-Conde \thanks{Electronic address: \texttt{fmclara@uaeh.edu.mx}}}
\date{}
\affil{Centro de Investigaci\'on en Matem\'aticas, ICBI, UAEH}
\begin{document}

\maketitle

\begin{abstract}
For cycles with non-negative weights on its edges, we define its {\it global resistance} as the sum of the 
distances given by the effective resistance metric between adjacent vertices. We prove the following result: 
for the Laplace operator on the 3-cycle with global resistance equal to a given constant,
the maximal value of the smallest positive eigenvalue and the minimal value of the largest eigenvalue, are 
both attained if and only if all the weights are equal to each other.
\end{abstract}
\medskip 

\keywords{Graph Laplacians, eigenvalues, extremal problems, resistance metric, isoperimetric inequalities}
\medskip 

\section{Introduction}

The study of isoperimetric inequalities for eigenvalues of the Laplacian and more general elliptic operators has a long history
(see, for example \cite{IIMPS, II, EPfEEO}). Back in the 1920's, Georg Faber and Edgar Krahn proved independently of each other,
a conjecture stated decades earlier by John W.S. Rayleigh. This conjecture established that, among all the planar domains with 
fixed area, the circle is the one with least first Dirichlet eigenvalue. That result, known as the Faber--Krahn inequality, is usually regarded 
as a starting point and a cornerstone of this theory. 
\medskip 

For eigenvalues of graph Laplacians, the study of isoperimetric inequalities is much more recent than the one for Laplacians on manifolds. 
In \cite{Friedman}, J. Friedman introduced the concept of the boundary of a graph and considered Faber--Krahn type problems for the eigenvalues 
of the Laplacian for regular trees. Different isoperimetric problems for eigenvalues in that context have been studied by various authors such as 
\cite{Leydold, Biyikoglu-Leylold_1,BLS, Katsuda-Urakawa}. In those works, the aim is to find the graph for which the Laplacian eigenvalues
are extremal when some chosen volume of the graph (say, its number of vertices or edges) is fixed. Minimizers of the first eigenvalue 
have been found for several classes of graphs. Just to mention one of these results, among other similar results, 
in \cite{Biyikoglu-Leylold_1} it is shown that, within the class of trees with fixed number of interior and boundary vertices, 
the minimal Dirichlet eigenvalue is attained for a comet (a star-shaped graph with a long tail), for which the boundary is the set of 
vertices with degree 1. That minimizer is unique up to isomorphism.
\medskip 
 
The present work is somewhat different to those referred in the previous paragraph in a couple of aspects. First, we consider graphs without  
boundary. Second, here the graphs considered are held fixed, and what varies are the weights on its edges. More precisely, we look at the 
eigenvalues of the usual Laplace operator on cycles, and the geometric quantity to be fixed 
(the {\it global resistance} presented in definition \ref{global_resistance}) is one that depends on the weights. This quantity is defined in 
terms of a metric of the graph known as the effective resistance metric (or just resistance metric) that was defined in \cite{Doyle-Snell} and 
originally inspired by electrical network theory. The effective resistance metric plays a key role in Jun Kigami's construction of the Laplacian on 
self-similar fractals, and for the construction of a natural metric on those fractal sets (see \cite{AoF,DEF}). 
\medskip 

Our work is organized as follows. In section \ref{erm} we present the resistance metric and introduce notation. In section \ref{gr} we define
the global resistance and make some calculations in this regard.
The main result of this work concerns the 3-cycle and is given in theorem \ref{isoperimetric_ineq}. It establishes 
that once the global resistance is fixed,  
the smallest positive eigenvalue of the Laplacian is maximized and the largest eigenvalue 
is minimized when the weights on the edges are all equal. The proof of this theorem is contained in section \ref{ii}. 
In section \ref{fc} we comment briefly on the plausible generalization of this result to larger $n$-cycles.

\section{The Effective Resistance Metric}\label{erm}
In this section we introduce notation and recall some basic definitions and well known facts from the analysis of graph Laplacians.
\medskip 

Let $\Gamma$ be a finite, simple and undirected graph with no loops. Denote by $\{v_0,\dots, v_{n-1}\}$ the vertices of $\Gamma$.  
Given a pair of vertices $v_i$ and $v_j$ connected to each other, we will write $v_i\sim v_j$. For such a pair, 
$c_{i,j}> 0$ will denote the weight on the edge between them and, 
following a well known electrical network analogy (originally introduced in \cite{Doyle-Snell}), 
we will call the $c_{i,j}$ the {\it conductances} of the graph.

It is convenient to define $c_{i,j}=0$ whenever the vertices $v_i$ and $v_j$ are not connected by an edge. In particular, the no loops 
condition means that $c_{i,i}=0$. 
We denote by $\ell(\Gamma)$ the set of real-valued functions defined on the vertices, and for $f\in \ell(\Gamma)$ we consider the 
usual norm and inner product. That is,
$$
\|f\|^2=f(v_0)^2+\cdots+f(v_{n-1})^2,\qquad \pint f g = f(v_0)g(v_0)+\cdots+f(v_{n-1})g(v_{n-1}).
$$

The Laplacian on $\Gamma$ is the linear operator on $\ell(\Gamma)$ given by
\begin{align*}
 (\Delta f)(v_k)=\sum_{j=0}^{n-1} c_{j,k}(f(v_k)-f(v_j)).
\end{align*}
It is well known that $\Delta$ is a non-negative operator, so that in particular it is self-adjoint. The 
associated quadratic form $E(f,g)=\pint {\Delta f}{g}$ is known as the {\it energy} of the graph. 
The expression $E(f)=E(f,f)$ defines a norm on the $n-1$ dimensional subspace of $\ell(\Gamma)$ that is orthogonal 
to the space of constant functions. This {\it energy norm} can also be written in the form 
\begin{equation}\label{energy}
 E(f) = \sum_{i\leq j} c_{i,j}(f(v_i)-f(v_j))^2. 
\end{equation}
\medskip 

The resistance metric between vertices of a weighted graph is defined in terms of the energy, as follows.
\medskip 

\begin{definition}
 Let $v_i$ and $v_j$ be two vertices in a weighted graph $\Gamma$. 
 \begin{align}
  m&=\min\left\{E(f)\ |\ f(v_i)=1, f(v_j)=0\right\}.\label{armonico} \\
  \ &\ \\
   d_{\rm r}(v_i,v_j)&=\frac 1 m.\nonumber
 \end{align}
The expression $d_{\rm r}(\cdot,\cdot)$ is called the effective resistance metric.
 \end{definition}

We refer to \cite{AoF,DEF} for the proof that the effective resistance metric is indeed a metric. 

It is known that the minimum in \eqref{armonico} is attained by the so-called {\it harmonic extension}. That is,
the unique function such that $f(v_i)=1$, $f(v_j)=0$  and  $(\Delta f)(v_k)=0$ for all $k\geq 2$ 
(see e.g. Theorem 2.1.6 in \cite{AoF} for a proof and more details of this).
This provides us with a formulation of the effective resistance metric that is computationally convenient:

Let's consider the distance between the first two vertices, namely $d_{\rm r}(v_0,v_1)$. 
This implies no loss of generality, since we can re-label the vertices as needed. 
The function $f$ with mininal energy $E(f)$ is the corresponding harmonic extension of $f(v_0)=1$ and $f(v_1)=0$ , and it  satisfies
$(\Delta f)(v_k)=0$ for all $k\geq 2$.
Let $H$ be the matrix representation of $\Delta$ with trespect to the canonical basis $\{e_0,\dots,e_{n-1}\}$ 
(where $e_j(v_k)=\delta_{j,k}$). 
\medskip 

Consider the decomposition
$$
H=
\begin{pmatrix}
 M & J^{t} \cr
 J &  L
\end{pmatrix},
\qquad\qquad 
M=\begin{pmatrix}
 \sum_k c_{0,k} & -c_{0,1} \cr
 -c_{0,1} &  \sum_k c_{1,k}
\end{pmatrix}.
$$
Let $f$ be the corresponding harmonic extension and split it as
$$
f =
\begin{pmatrix}
 f_0 \cr f_1
\end{pmatrix}
\qquad
f_0 = 
\begin{pmatrix}
1 \cr 0       
\end{pmatrix}
\qquad
f_1 = 
\begin{pmatrix}
f(v_2) \cr \vdots \cr f(v_{n-1})       
\end{pmatrix}
$$
The condition $(\Delta f)(v_k)=0$ gives $Jf_0+Lf_1=0$ so that $f_1$ is determined by $f_1=-L^{-1}Jf_0$. This in turn, gives 
an explicit formula for the minimal value of the energy (and hence for the metric $d_{\rm r}(v_0,v_1)$) as 
\begin{align}
 E(f_0) &= \pint {(M-J^t L^{-1} J)f_0}{f_0}.\label{hext}\\  
 d_{\rm r}(v_0,v_1) &= 1/E(f_0).\nonumber
 \end{align}

We note that if we multiply all the conductances by a constant $\alpha$, then $M-J^tL^{-1}J$ (and therefore $E(f_0)$ as well) 
is multiplied by the same constant. The resistance metric is then inversely proportional to $\alpha$. 

\section{Global Resistance in Cycles}\label{gr}

In this section we introduce the {\it global resistance}. This will be the geometric quantity to be fixed in the
isoperimetric problems to be considered in section \ref{ii}. 

\begin{definition}\label{global_resistance}
 For a weighted graph $\Gamma$ define the global resistance $\rho(\Gamma)$ by
 $$
 \rho(\Gamma) = \sum_{v_i\sim v_j} d_{\rm r}(v_i,v_j) 
 $$
 where the sum is taken over all un-ordered pairs of adjacent vertices. 
\end{definition}
To avoid any possible confussion, we remark that even though the effective resistance metric is defined 
for any pair of vertices in the graph, in the definition of the global resistance we only consider adjacent pairs. 
\medskip 

For the 3-cycle, the operators in \eqref{hext} are given by 
$$
M = 
\begin{pmatrix}
 c_{0,1}+c_{0,2} & -c_{0,1} \cr
 -c_{0,1} & c_{0,1}+c_{1,2}
\end{pmatrix},
\qquad
J =
\begin{pmatrix}
 -c_{0,2} & -c_{1,2}
\end{pmatrix},
\qquad 
L=\Bigl(c_{0,2}+c_{1,2}\Bigr)
$$
From this, the metric can be easily calculated to be  
$$
d_{\rm r}(v_0,v_1) = \frac{c_{0,2}+c_{1,2}}{c_{0,1}c_{0,2}+c_{0,1}c_{1,2}+c_{0,2}c_{1,2}}
$$
By symmetry we can see that 
$$
d_{\rm r}(v_0,v_2) = \frac{c_{0,1}+c_{1,2}}{c_{0,1}c_{0,2}+c_{0,1}c_{1,2}+c_{0,2}c_{1,2}}
$$
and
$$
d_{\rm r}(v_1,v_2) = \frac{c_{0,1}+c_{0,2}}{c_{0,1}c_{0,2}+c_{0,1}c_{1,2}+c_{0,2}c_{1,2}}.
$$
\medskip 

\begin{center}
\begin{tikzpicture}[shorten >=1pt,->]
  \tikzstyle{vertex}=[circle,fill=black!25,minimum size=12pt,inner sep=2pt]
  \node[vertex] (v0) at (0,2) {$v_0$};
  \node[vertex] (v1) at (-1,0)   {$v_1$};
  \node[vertex] (v2) at (1,0)  {$v_2$};
  \draw (v0) -- (v1) -- (v2) -- (v0) --  cycle;
  \node(c01) at (-1,1) {$c_{0,1}$};
    \node(c02) at (1,1) {$c_{0,2}$};
  \node(c12) at (0,-0.3) {$c_{1,2}$};
\end{tikzpicture}
\end{center}
\medskip 

The global resistance of the 3-cycle then becomes
\begin{equation}\label{resistencia_global}
\rho(T)=\frac{2(c_{0,1}+c_{0,2}+c_{1,2})}{c_{0,1}c_{0,2}+c_{0,1}c_{1,2}+c_{0,2}c_{1,2}}
\end{equation}

Note that if we hold fixed two of the conductances, then $\rho(T)$ is decreasing 
with respect to the third conductance. This simple observation will be useful later. 

\section{Isoperimetric Inequalities for 3-Cycles}\label{ii}

In this section we will prove our main result, namely:

\begin{theorem}\label{isoperimetric_ineq}
 Let $\lambda_1\leq\lambda_2$ be the positive eigenvalues of the Laplace operator associated with a 
 weighted 3-cycle $T$. Then the following inequalities hold:
 $$
\lambda_1\rho(T)\leq 6\leq \lambda_2\rho(T).
 $$
 In each of both sides, the equality occurs if and only if the weights are all equal to each other.
\end{theorem}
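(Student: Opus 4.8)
The plan is to reduce the whole statement to the two elementary symmetric functions of the positive eigenvalues, which can be read off from the characteristic polynomial of the $3\times 3$ Laplacian matrix $H$ without ever solving a quadratic. Writing $a=c_{0,1}$, $b=c_{0,2}$, $c=c_{1,2}$ for brevity, the matrix $H$ is singular (the constant vector lies in its kernel), so its characteristic polynomial factors as $\lambda\bigl(\lambda^2 - (\operatorname{tr} H)\lambda + \sigma\bigr)$, where $\sigma$ is the sum of the three principal $2\times 2$ minors of $H$. A direct computation gives $\operatorname{tr} H = 2(a+b+c)$ and, crucially, each of the three principal minors equals $ab+bc+ca$, so $\sigma = 3(ab+bc+ca)$. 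Hence
$$
\lambda_1+\lambda_2 = 2(a+b+c), \qquad \lambda_1\lambda_2 = 3(ab+bc+ca).
$$

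First I would introduce the abbreviations $S=a+b+c$ and $P=ab+bc+ca$, so that the two identities read $\lambda_1+\lambda_2 = 2S$ and $\lambda_1\lambda_2 = 3P$, while the global resistance \eqref{resistencia_global} becomes $\rho(T)=2S/P$. Combining these yields the key identity
$$
\rho(T) = \frac{2S}{P} = \frac{6S}{\lambda_1\lambda_2},
$$
from which $\lambda_1\rho(T) = 6S/\lambda_2$ and $\lambda_2\rho(T)=6S/\lambda_1$. The two desired inequalities $\lambda_1\rho(T)\le 6$ and $6\le\lambda_2\rho(T)$ therefore reduce, respectively, to $S\le\lambda_2$ and $\lambda_1\le S$.

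This is where the argument closes itself: since $\lambda_1+\lambda_2=2S$, the quantity $S$ is exactly the arithmetic mean $(\lambda_1+\lambda_2)/2$ of the two eigenvalues, so the sandwich $\lambda_1\le S\le\lambda_2$ holds automatically and both inequalities hold simultaneously. Moreover each is an equality precisely when $\lambda_1=\lambda_2$. For the equality case I would characterize $\lambda_1=\lambda_2$ through the discriminant: the quadratic factor has a double root iff $(\lambda_1+\lambda_2)^2=4\lambda_1\lambda_2$, i.e. $4S^2=12P$, i.e. $S^2=3P$. Expanding $S^2=a^2+b^2+c^2+2P$ turns this into $a^2+b^2+c^2=ab+bc+ca$, equivalently $\tfrac12\bigl[(a-b)^2+(b-c)^2+(c-a)^2\bigr]=0$, which holds iff $a=b=c$.

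The computation carries almost no obstacle; the only genuine content is the clean identification $\rho(T)=6S/(\lambda_1\lambda_2)$, which converts the seemingly two-sided bound into the trivial fact that the mean of two numbers lies between them. The one point demanding care is the equality analysis: I must ensure that both inequalities become equalities for the \emph{same} configuration, which is exactly why the single spectral condition $\lambda_1=\lambda_2$---and hence the single geometric condition $a=b=c$---governs both extremal cases at once.
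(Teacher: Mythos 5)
Your proof is correct, and it takes a genuinely different and considerably shorter route than the paper. The paper normalizes to $\rho(T)=2$ and then runs a deformation argument: Lemma \ref{dos_iguales} computes the spectrum explicitly when two conductances coincide, and Lemmas \ref{lambda2} and \ref{lambda1} show, via Rayleigh-quotient and eigenvector-ordering arguments, that $\lambda_2$ decreases and $\lambda_1$ increases as one conductance is moved toward the middle one while $\rho$ is held fixed; the theorem follows by chaining these monotonicity statements. You instead read off $\lambda_1+\lambda_2=2(a+b+c)=2S$ and $\lambda_1\lambda_2=3(ab+bc+ca)=3P$ from the characteristic polynomial (all three principal $2\times 2$ minors do equal $ab+bc+ca$, as you claim), combine with $\rho(T)=2S/P$ from \eqref{resistencia_global} to get $\rho(T)=6S/(\lambda_1\lambda_2)$, and reduce both inequalities to the tautology $\lambda_1\le\tfrac12(\lambda_1+\lambda_2)\le\lambda_2$; the equality case falls out of the discriminant as $\sum(a-b)^2=0$. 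Your identity also makes it transparent why a single condition $a=b=c$ governs both equality cases at once, and it dispenses with the scaling normalization entirely. What the paper's longer route buys is structural information your computation does not give --- explicit eigenvectors, and monotone dependence of each eigenvalue along the interpolation paths --- which the author flags as the kind of behaviour one might hope to exploit for $n$-cycles with $n>3$; by contrast, your argument leans on the coincidence that for $n=3$ there are only two positive eigenvalues, so the Vieta data determines everything, and it would not extend verbatim to larger cycles. As a proof of Theorem \ref{isoperimetric_ineq} itself, however, your version is complete and arguably preferable.
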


The proof will follow from three lemmas, which might be interesting on their own right, as they show that 
the eigenvalues depend on the conductances in a nice and simple way, as soon as the global resistance is fixed. 
The first of these lemmas, deals with the case where two of the conductances are equal to some constant $b$. In this case,
there is the notable and rather surprising property that, even though the eigenvalues vary,
the corresponding eigenvectors stay the same. Indeed, for the case where all the conductances are equal to $1$, we have that $\lambda=3$ is an 
eigenvalue with $\{(1,-1,0),(1,1,-2)\}$ an orthogonal basis of its eigenspace; those vectors will also be eigenvectors for any choice of $b$.  
More precisely:

\begin{lemma}\label{dos_iguales}
 Let $T$ be a 3-cycle with two conductances equal to some $b>0$ and global resistance $\rho(T)=2.$ 
 
 \begin{compactenum}
 \item 
  If $b\geq 1$ and $b=c_{0,2}=c_{1,2}$, then $\{(1,1,1),(1,-1,0),(1,1,-2)\}$ are eigenvectors
 corresponding to the (ordered) eigenvalues $0=\lambda_0<\lambda_1\leq\lambda_2$.
 \item 
 If $b\leq 1$ and $b=c_{0,2}=c_{1,2}$, then the order of the above eigenvectors for $\lambda_1$ and $\lambda_2$ is reversed, 
 with $(1,1,-2)$ being eigenvalue for $\lambda_1$, and $(1,-1,0)$ eigenvalue for $\lambda_2$.
 \item 
 Let $T$ be a 3-cycle for either of the above situations and any value of $b$. The eigenvalues of $T$ satisfy 
 $$
 \lambda_1\leq 3\leq\lambda_2
 $$
 with equality  if and only if all the conductances are equal to $1$.
\end{compactenum}
  \end{lemma}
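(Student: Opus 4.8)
The plan is to parametrize the family explicitly. By the stated convention I set $c_{0,2}=c_{1,2}=b$ and write $a:=c_{0,1}$ for the remaining conductance. Substituting into the global resistance formula \eqref{resistencia_global} gives $\rho(T)=\frac{2(a+2b)}{b(2a+b)}$, and imposing $\rho(T)=2$ solves uniquely to $a=\frac{b(2-b)}{2b-1}$. I would record at this point that positivity of $a$ (with $b>0$) forces $b\in(1/2,2)$; this is the admissible range, over which $a$ sweeps all of $(0,\infty)$, and it is the implicit meaning of ``any value of $b$'' in part 3.

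Next I would pin down the eigenvectors. The cleanest explanation of the ``surprising'' invariance of the eigenvectors is by symmetry: since $c_{0,2}=c_{1,2}$, the Laplacian $\Delta$ commutes with the permutation swapping $v_0$ and $v_1$ while fixing $v_2$, so its eigenspaces split into vectors that are symmetric or antisymmetric under this swap. The only mean-zero antisymmetric direction is $(1,-1,0)$, which is therefore an eigenvector; its orthogonal complement inside the mean-zero subspace is spanned by the symmetric vector $(1,1,-2)$, which is hence also an eigenvector, while $(1,1,1)$ spans the kernel. A direct matrix multiplication confirms this and yields the eigenvalues for every $a,b$: $\Delta(1,1,1)=0$, $\Delta(1,-1,0)=(2a+b)(1,-1,0)$, and $\Delta(1,1,-2)=3b(1,1,-2)$. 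Substituting $a=\frac{b(2-b)}{2b-1}$ turns the first nonzero eigenvalue into $2a+b=\frac{3b}{2b-1}$, while the other remains $3b$.

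It then remains to order the two nonzero eigenvalues and compare them to $3$. Dividing by the positive quantity $3b$, I would compare $\frac{1}{2b-1}$ with $1$ on the range $(1/2,2)$: this gives $\frac{3b}{2b-1}\geq 3b$ precisely when $b\leq 1$ and the reverse when $b\geq 1$, which produces the eigenvector/eigenvalue assignments of parts 1 and 2. For part 3 I would compare each eigenvalue to $3$ directly: $3b\leq 3\iff b\leq 1$, while $\frac{3b}{2b-1}\leq 3\iff b\geq 1$ (using $2b-1>0$ on the admissible range). In either case the smaller eigenvalue is at most $3$ and the larger is at least $3$, and every inequality is strict unless $b=1$, i.e. unless $a=b=1$ and all three conductances equal $1$.

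I expect no serious obstacle; the computations are elementary once the substitution $a=\frac{b(2-b)}{2b-1}$ is in hand. The only points that demand care are keeping track of the admissible range $b\in(1/2,2)$ (so that $2b-1>0$ and all divisions preserve the direction of the inequalities) and correctly matching which eigenvector belongs to $\lambda_1$ versus $\lambda_2$ as $b$ crosses the value $1$ — which is exactly the transition recorded in the separate cases of parts 1 and 2.
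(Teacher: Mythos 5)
Your proposal is correct and follows essentially the same route as the paper: substitute $c_{0,1}=\frac{b(2-b)}{2b-1}$ from the constraint $\rho(T)=2$, verify directly that $(1,-1,0)$ and $(1,1,-2)$ are eigenvectors with eigenvalues $\frac{3b}{2b-1}$ and $3b$, and compare. Your only departures are cosmetic but welcome: the symmetry explanation for the fixed eigenvectors, the direct comparison of each eigenvalue with $3$ in part 3 (the paper instead argues via monotonicity of $\lambda_1$ in $b$, and in doing so momentarily swaps the cases $b\leq 1$ and $b\geq 1$ in a sentence that your version cleanly avoids), and the correct observation that positivity of $c_{0,1}$ gives the open interval $b\in(1/2,2)$ rather than $(1/2,2]$.
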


\begin{proof}
 Substitution of $b=c_{0,2}=c_{1,2}$ in \eqref{resistencia_global} yields
 $$
 c_{0,1}=\frac{b(2-b)}{2b-1}.
 $$
 so that the positivity condition for the conductance restricts the possible values of $b$ to the interval $(1/2,2]$. 

\begin{center}
\begin{tikzpicture}[shorten >=1pt,->]
  \tikzstyle{vertex}=[circle,fill=black!25,minimum size=12pt,inner sep=2pt]
  \node[vertex] (v0) at (0,2) {$v_0$};
  \node[vertex] (v1) at (-1,0)   {$v_1$};
  \node[vertex] (v2) at (1,0)  {$v_2$};
  \draw (v0) -- (v1) -- (v2) -- (v0) --  cycle;
  \node (c01) at (-1.3,1) {$\frac{b(2-b)}{2b-1}$};
    \node (c02) at (0.8,1) {$b$};
  \node (c12) at (0,-0.3) {$b$};
\end{tikzpicture}
\end{center}

 The Laplacian matrix is in this case
 $$
H = \begin{pmatrix}
 \displaystyle{\frac{b(b+1)}{2b-1}} & \displaystyle{-\frac{b(2-b)}{2b-1} }& -b \cr
 \ &\ &\ \cr
  \displaystyle{-\frac{b(2-b)}{2b-1}} & \displaystyle{\frac{b(b+1)}{2b-1}} & -b \cr
   \ &\ &\ \cr
  -b & -b & 2b
   \end{pmatrix}
 $$
Direct computation shows that 
$$
H\begin{pmatrix}
  \phantom{-}1\cr -1\cr \phantom{-}0
 \end{pmatrix}
=\frac{3b}{2b-1}
\begin{pmatrix}
  \phantom{-}1\cr -1\cr \phantom{-}0
 \end{pmatrix}
\qquad 
{\rm and}\qquad
H\begin{pmatrix}
  \phantom{-}1\cr \phantom{-}1\cr -2
 \end{pmatrix}
=3b
\begin{pmatrix}
  \phantom{-}1\cr \phantom{-}1\cr -2
 \end{pmatrix}.
$$
 Since $3b/(2b-1)\leq 3b$ if and only if $b\geq 1$, then $\lambda_1=3b/(2b-1)$ for $b\leq 1$ and $\lambda_1=3b$ for $b\geq 1$.  
 So, $\lambda_1$ is increasing as a function of $b\in (1/2,1]$ and decreasing for $b\geq 1$ and we can see that its maximum value is 
 attained at $b=1$. The assertion for $\lambda_2$ follows in just the same way. 
 \end{proof}

 \begin{figure}
 \begin{center}
  \includegraphics[scale=0.59]{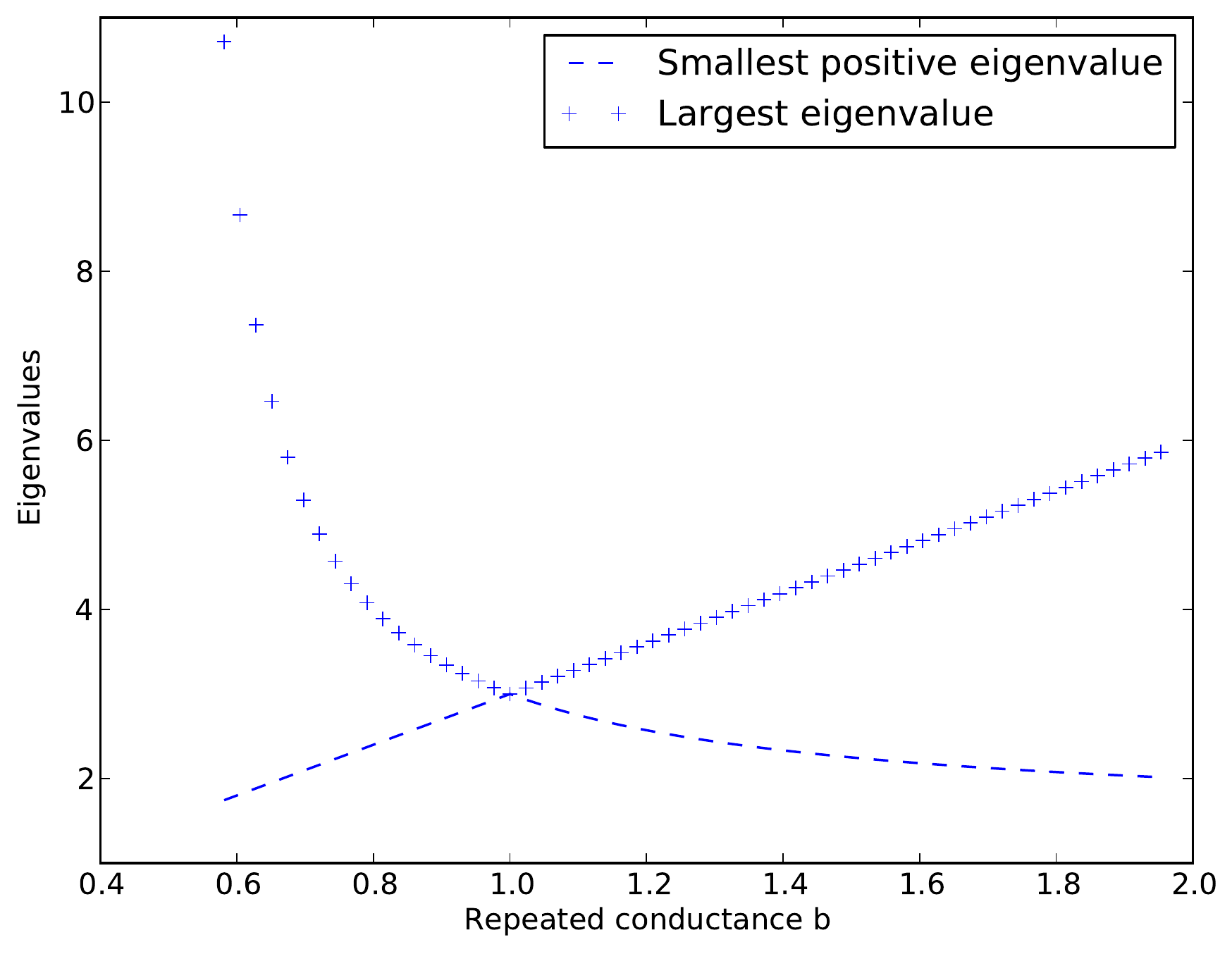}
  \caption{The two positive eigenvalues of the Laplacian of a 3-cycle conductances $\{b,b,(b(2-b))/(2b-1)\}$.}
 \label{dibujo_dos_iguales}
 \end{center}
 \end{figure}
 
 The behaviour of the eigenvalues in the previous lemma is shown in figure \ref{dibujo_dos_iguales}.
 \medskip 
  
 The next lemma establishes that when the middle value of the conductances is held fixed, we can make the maximal eigenvalue $\lambda_2$ 
 smaller by approaching one of the other two conductances to that middle value.
 \medskip 
 
 \begin{lemma}\label{lambda2}
 Let $T$ be a weighted 3-cycle with vertices $\{v_0,v_1,v_2\}$, global resistance $\rho(T)=2$, and denote by $c_{i,j}$ 
 the conductance on the edge joining $v_i$ with $v_j$. 
  
  \begin{compactenum}
    \item \label{c-grande}
      Supposse that 
      $$
      c_{0,1}<1\leq c_{0,2}\leq c_{1,2}=r.
      $$
      and fix $c_{0,2}=b$ for some $b\geq 1$. 
      Let $\lambda_2(r)$ be the largest eigenvalue of the Laplacian associated with $T$.
      Then $\lambda_2(r)$ is an increasing function of $r\geq b$.
   \item \label{c-chico} 
     Supposse that 
     $$
     r=c_{1,2}\leq c_{0,2}\leq 1<c_{0,1}.
     $$ 
     and fix $c_{0,2}=b\leq 1.$ Then, $\lambda_2(r)$ is a decreasing function of $r\leq b$.
  \end{compactenum}
 \end{lemma}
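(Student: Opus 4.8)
The plan is to collapse both positive eigenvalues to functions of the single symmetric quantity $s=c_{0,1}+c_{0,2}+c_{1,2}$. Writing also $p=c_{0,1}c_{0,2}+c_{0,1}c_{1,2}+c_{0,2}c_{1,2}$, the constant vector $(1,1,1)$ shows $0$ is an eigenvalue of $H$, so the two positive eigenvalues satisfy $\lambda_1+\lambda_2=\operatorname{tr}H=2s$ and $\lambda_1\lambda_2=3p$, the latter because each of the three principal $2\times2$ minors of $H$ equals $p$. The standing hypothesis $\rho(T)=2$ together with \eqref{resistencia_global} reads $2s/p=2$, i.e. $s=p$; and since $(c_{0,1}+c_{0,2}+c_{1,2})^2\geq 3(c_{0,1}c_{0,2}+c_{0,1}c_{1,2}+c_{0,2}c_{1,2})$, this forces $s\geq 3$. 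Substituting $p=s$ into $t^2-2st+3s=0$ gives
\[
\lambda_2=s+\sqrt{s^2-3s},
\]
which depends on $s$ alone and is strictly increasing for $s\geq 3$. This neatly sidesteps the eigenvector swap observed in Lemma \ref{dos_iguales}: whichever eigenvector carries the larger eigenvalue, $\lambda_2$ is always this expression. Hence it suffices to track the monotonicity of $s$ as $r=c_{1,2}$ varies.

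Next I would fix $c_{0,2}=b$ and write $a=c_{0,1}$ for the conductance implicitly determined by the constraint $s=p$, namely $a+b+r=a(b+r)+br$. Differentiating this identity in $r$ and solving for $da/dr$ produces, after a short simplification, the clean expression
\[
\frac{ds}{dr}=1+\frac{da}{dr}=\frac{r-a}{\,b+r-1\,}.
\]
(Equivalently one can solve $a=(b+r-br)/(b+r-1)$ explicitly and differentiate, but the implicit route keeps the algebra shorter.) With this formula in hand, the sign of $ds/dr$ is read off directly from the ordering hypotheses, and the monotonicity of $\lambda_2$ then follows because $\lambda_2$ is increasing in $s$.

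For part \ref{c-grande}, the assumptions $c_{0,1}<1\leq c_{0,2}=b$ and $r\geq b\geq 1$ give $r-a>0$ (since $r\geq 1>a$) and $b+r-1>0$, so $ds/dr>0$ and $\lambda_2(r)$ increases; one checks that $a$ remains in $(0,1)$ throughout the admissible range, so $r-a>0$ is preserved. For part \ref{c-chico}, the assumptions $r\leq b\leq 1<c_{0,1}=a$ give $r-a<0$, while positivity of all three conductances forces $b+r>1$ (otherwise the formula for $a$ would be non-positive), hence $b+r-1>0$ and $ds/dr<0$, so $\lambda_2(r)$ decreases.

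I expect essentially all the content to sit in the opening reduction — recognizing that, once $\rho(T)=2$ is imposed, both positive eigenvalues collapse to functions of $s$ alone — after which the argument is a short sign-chase. The one place I would be most careful is verifying the positivity constraint $b+r>1$ in part \ref{c-chico}: this is exactly what keeps the denominator of $ds/dr$ positive and simultaneously fixes the admissible range of $r$, and it is where a sign slip would be easiest to make.
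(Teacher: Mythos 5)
Your proposal is correct, and it takes a genuinely different route from the paper. The paper argues variationally: it first derives a necessary ordering of the entries of the $\lambda_2$-eigenvector from the Schwarz (Rayleigh) quotient, then plugs that fixed eigenvector into the energy form for the perturbed conductances and reduces both parts to the monotonicity of an explicit auxiliary function $\gamma(r)=\frac{r(b-1)-b}{1-b-r}A+rB$ under the constraints $A\leq B$ (part \ref{c-grande}) or $A\geq 4B$ (part \ref{c-chico}). You instead exploit the spectral symmetric functions: I checked that indeed $\lambda_1+\lambda_2=\operatorname{tr}H=2s$, that each principal $2\times 2$ minor of $H$ equals $p$ so $\lambda_1\lambda_2=3p$, that $\rho(T)=2s/p=2$ forces $p=s\geq 3$, and hence $\lambda_2=s+\sqrt{s^2-3s}$ depends on $s$ alone and is increasing there; the implicit differentiation giving $ds/dr=(r-a)/(b+r-1)$ and the sign analysis in both regimes (including the forced inequality $b+r>1$ in part \ref{c-chico}) are also correct. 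Your approach buys quite a lot: it sidesteps the eigenvector characterization and the eigenvalue crossing entirely, and it is in fact stronger than the lemma requires --- since $s\geq 3$ with equality iff all conductances are equal, the identities $\lambda_{1,2}=s\mp\sqrt{s^2-3s}$ prove Theorem \ref{isoperimetric_ineq} directly, rendering Lemmas \ref{dos_iguales}--\ref{lambda1} unnecessary. The trade-off is that the collapse of the spectrum to a single scalar is special to $n=3$ (for an $n$-cycle the characteristic polynomial has $n-1$ nontrivial coefficients), so unlike the paper's variational setup your argument offers no obvious foothold for the conjectured generalization in section \ref{fc}.
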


 \begin{proof} 
  First, we give a characterization for the eigenvector associated with $\lambda_2$, for conductances
  satisfying $c_{0,1}\leq c_{0,2}\leq c_{1,2}$ and being otherwise arbitrary.
  For a function $u\in\ell(T)$ given by $u(v_j)=x_j$, the energy \eqref{energy} takes the form
  \begin{equation}\label{3cycle_energy}
   E((x_0,x_1,x_2))=c_{0,1}(x_0-x_1)^2+c_{0,2}(x_0-x_2)^2+c_{1,2}(x_1-x_2)^2.  
   \end{equation}
  
  The Schwarz quotient 
  \begin{equation}
      S(u)=\frac{E(u)}{\|u\|^2}\label{schwarz}
  \end{equation}
  reaches its maximum value exactly for the functions $u$ such that $\Delta u=\lambda_2 u$.
  Note that for any permutation of the triad $(x_0,x_1,x_2)$ the denominator in \eqref{schwarz} does not change, while the energy will 
  be maximal when the largest distance $|x_i-x_j|$ is next to the largest conductances and the smallest distance
  $|x_i-x_j|$ is next to the smallest conductances. Therefore a necessary condition for $\Delta u=\lambda_2 u$ is that
  $$
  |x_0-x_1|\leq|x_0-x_2|\leq|x_1-x_2|
  $$
  From the orthogonality condition $x_0+x_1+x_2=0$, we can see that the term with largest absolute value 
  must have different sign than the other two, so that it must be involved in the two largest distances. 
  So, we obtain the necessary condition $|x_0|\leq|x_1|\leq|x_2|$  
  for $(x_0,x_1,x_2)$ being an eigenvector with eigenvalue $\lambda_2$. 
  
  Without loss of generality, we can always chose the 
  eigenvector to be such that
  \begin{equation}\label{caracterizar_e2}
   x_2<0<x_0\leq x_1<|x_2|=x_0+x_1\qquad {\rm and}\qquad x_0^2+x_1^2+x_2^2=1.
  \end{equation}
 
  For $b>1$ a given constant, the condition $\rho(T)=2$ determines $c_{0,1}$ as a function of the variable $r$. 
  
  Precisely,
    $$
      c_{0,1} = \frac{r(b-1)-b}{1-b-r}. 
    $$
\medskip     

  \begin{center}
    \begin{tikzpicture}[shorten >=1pt,->]
      \tikzstyle{vertex}=[circle,fill=black!25,minimum size=12pt,inner sep=2pt]
	\node[vertex] (v0) at (0,2.5) {$v_0$};
	\node[vertex] (v1) at (-1.25,0)   {$v_1$};
	\node[vertex] (v2) at (1.5,0)  {$v_2$};
	\draw (v0) -- (v1) -- (v2) -- (v0) --  cycle;
	\node (c01) at (-1.9,1.5) {$c_{0,1}=\frac{r(b-1)-b}{1-b-r}$};
	\node (c02) at (1.5,1.5) {$c_{0,2}=r$};
	\node (c12) at (0,-0.5) {$c_{1,2}=b$};
    \end{tikzpicture}
  \end{center}
\medskip

  Now, we suppose that $u(v_j)=x_j$ is an eigenvector of $\Delta_{r_0}$  for some fixed $r_0>a$, with eigenvalue $\lambda_2(r_0)$. 
  Assume also that $(x_0,x_1,x_2)$ satisfies the conditions \eqref{caracterizar_e2}.
  
  We want to show that 
   \begin{equation}\label{lambda2creciente}
    \lambda_2(r_0)\leq\lambda_2(r_1). 
   \end{equation}
  whenever or $1\leq b\leq r_1\leq r_0$.
  
  Note that
  \begin{align*}
    \lambda_2(r_0)& = E_{r_0}((x_0,x_1,x_2)) \\ 
    & = \frac{r_0(b-1)-b}{1-b-r_0}(x_0-x_1)^2+b(x_0-x_2)^2+r_0(x_1-x_2)^2\\
    &\ \\
    \lambda_2(r_1)& \geq E_{r_1}((x_0,x_1,x_2)) \\
    & = \frac{r_1(b-1)-b}{1-b-r_1}(x_0-x_1)^2+b(x_0-x_2)^2+r_1(x_1-x_2)^2. 
  \end{align*}
  Hence, in order to obtain \eqref{lambda2creciente} it suffices to verify that
  $$
  \frac{r_0(b-1)-b}{1-b-r_0}(x_0-x_1)^2+r_0(x_1-x_2)^2\leq\frac{r_1(b-1)-b}{1-b-r_1}(x_0-x_1)^2+r_1(x_1-x_2)^2. 
  $$  
  To see this, we will show that 
  \begin{equation}\label{lagama}
  \gamma(r)= \frac{r(b-1)-b}{1-b-r}A+rB
  \end{equation}
  is increasing for $r\geq b$, provided that $0<A\leq B$. Since by conditions \eqref{caracterizar_e2} 
  we have $(x_0-x_1)^2\leq (x_1-x_2)^2$ this would give the result. 
  
  Now,
   \begin{align*}
   \gamma'(r)& = \frac{(b-1+r)^2 B-\left((b-1)^2+b\right)A}{(1-b-r)^2} \\
   &\ \\
   & \geq \frac{(b-1+r)^2 -\left((b-1)^2+b\right)}{(1-b-r)^2}A.
  \end{align*}
  But
  \begin{align*}
  (b-1+r)^2 & 	\geq (2b-1)^2 \\ 
	    & = b^2 + (b-1)^2 +2b(b-1) \\
	    & > (b-1)^2+b.
  \end{align*}
  We conclude that $\gamma'>0$ and \eqref{lambda2creciente} follows. 
  \medskip
 
  To prove part \eqref{c-chico}, first we observe that the change of order of the conductances means that the roles of $x_2$ and
  $x_0$ are interchanged, so that now we take the eigenvector for $\lambda_2$ to be such that
  \begin{equation}\label{caracterizar_e2_bis}
   x_0<0<x_2\leq x_1<|x_0|=x_1+x_2\qquad {\rm and}\qquad x_0^2+x_1^2+x_2^2=1.
  \end{equation}
    
    Take an eigenvector $(x_0,x_1,x_0)$ with eigenvalue $\lambda_2$ for some $r_0\leq b\leq 1$, satisfying \eqref{caracterizar_e2_bis}.
    For an arbitrary $r_2<r_0$ we want to show that $\lambda(r_2)>\lambda(r_0)$. Proceeding just like in part \eqref{c-grande} it follows
    that it is enough prove the inequality 
    $$
    \frac{r_0(b-1)-b}{1-b-r_0}(x_0-x_1)^2+r_0(x_1-x_2)^2\leq\frac{r_2(b-1)-b}{1-b-r_2}(x_0-x_1)^2+r_2(x_1-x_2)^2. 
    $$ 
    Noting that $x_0=-x_1-x_2$ we can see that 
    \begin{align*}
     (x_0-x_1)^2&=(2x_1-x_2)^2\\
     &=((x_1-x_2)+x_1)^2\\
     &\geq 4(x_1-x_2)^2.
     \end{align*}
      In view of this, to get the inequality $\lambda(r_2)>\lambda(r_0)$ it is enough to verify that the function $\gamma(r)$ in 
      \eqref{lagama} is decreasing whenever $A\geq 4B$. This is equivalent to  the inequality
      $$
      (b-1+r)^2 B < 4((b-1)^2+b)A.      
      $$
  It is clear that it is enough to check the extremal case $B=4A$ and $r=a$
    $$
    (2b-1)^2<4((b-1)^2+b)
    $$
  which can be easily verified.     
  \end{proof}
 \medskip 
 
 \begin{figure}
 \begin{center}
  \includegraphics[scale=0.59]{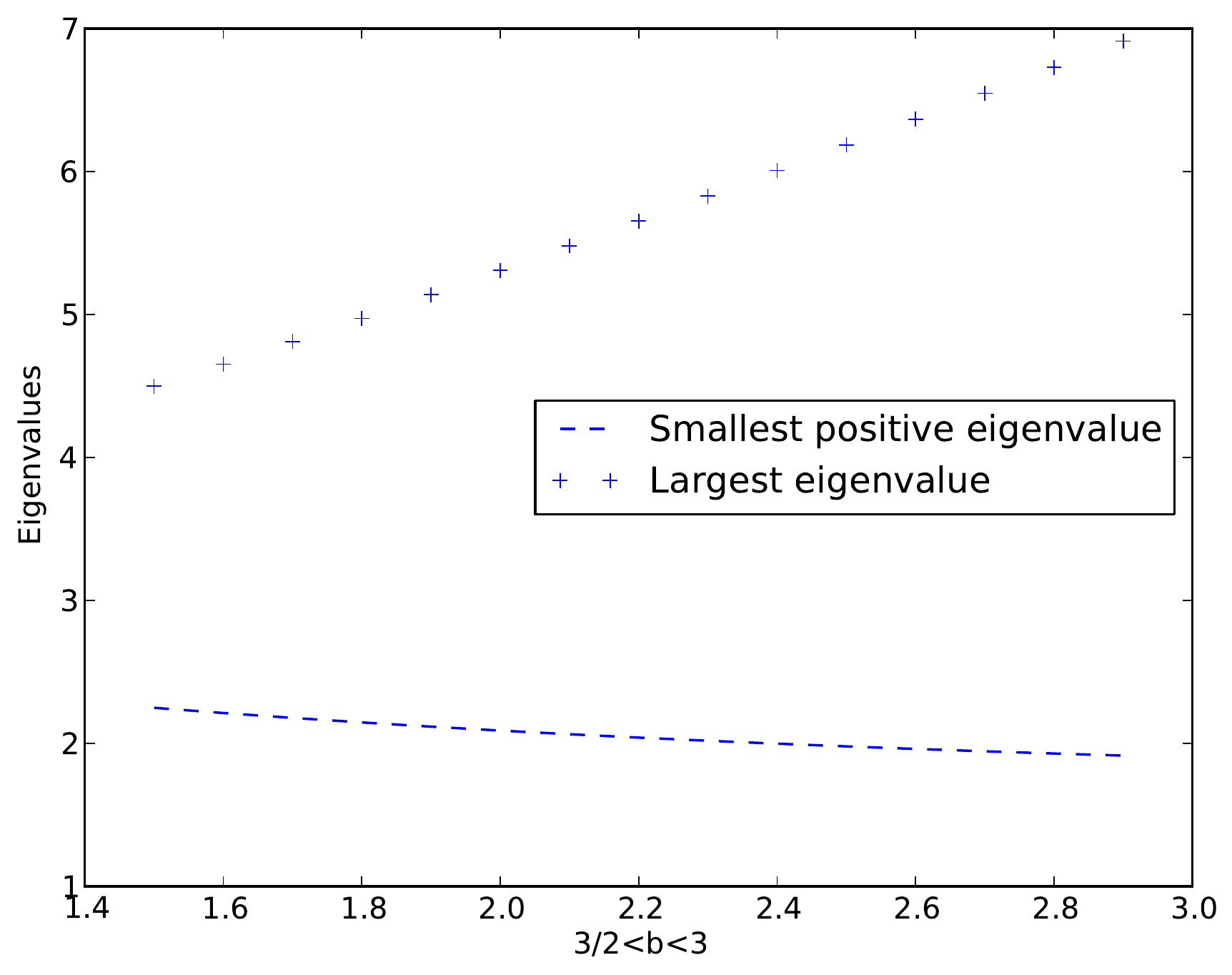}
  \caption{The two positive eigenvalues of the Laplacian of a 3-cycle with conductances equal to $\{(3-r)/(r+1),3/2,r\}$.}
 \label{bgrande}
 \end{center}
 \end{figure}
 \medskip 
 
 The analogous result for $\lambda_1$ is as follows. The ideas in the proof of lemma \ref{lambda1} are very similar to those of
 lemma \ref{lambda2}, so we will not go into as much detail.
 \medskip 
 
 \begin{lemma}\label{lambda1}
 Let $T$ be a weighted 3-cycle with vertices $\{v_0,v_1,v_2\}$, global resistance $\rho(T)=2$, and denote by $c_{i,j}$ 
 the conductance on the edge joining $v_i$ with $v_j$. 
  
  \begin{compactenum}
    \item \label{c-grande1}
      Supposse that 
      $$
      c_{0,1}<1\leq c_{0,2}\leq c_{1,2}=r.
      $$
      and fix $c_{0,1}=b$ for some $b\geq 1$. 
      Let $\lambda_1(r)$ be the largest eigenvalue of the Laplace operator associated with $T$.
      Then $\lambda_1(r)$ is a decreasing function for $r\geq b$.
   \item \label{c-chico1} 
     Supposse that 
     $$
     r=c_{1,2}\leq c_{0,2}\leq 1<c_{0,1}.
     $$ 
     and fix $c_{0,1}=b\leq 1.$ Then $\lambda_1(r)$ is increasing for $0<r\leq b$.
  \end{compactenum}
 \end{lemma}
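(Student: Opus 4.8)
The plan is to follow the template of Lemma \ref{lambda2}, merely trading the maximization of the Schwarz quotient \eqref{schwarz} for its minimization, and then to repair the one place where that trade breaks down by invoking the exact spectrum available once $\rho(T)=2$. First I would characterize the eigenvector for $\lambda_1$. Since $\lambda_1$ is the minimum of $S(u)$ over $u\perp(1,1,1)$, minimizing the energy \eqref{3cycle_energy} now forces the \emph{smallest} distance $|x_i-x_j|$ onto the largest conductance and the largest distance onto the smallest conductance. With $c_{0,1}\leq c_{0,2}\leq c_{1,2}$ this yields the necessary conditions $|x_1-x_2|\leq|x_0-x_2|\leq|x_0-x_1|$ and, from $x_0+x_1+x_2=0$, the sign pattern $|x_2|\leq|x_1|\leq|x_0|$ with $x_0$ opposite in sign to $x_1,x_2$; after normalizing I may take $x_1\leq x_2<0<x_0$ and $\|u\|=1$. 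Fixing $c_{0,2}=b$ and comparing energies of the eigenvector at two parameter values, the monotonicity of $\lambda_1$ reduces, exactly as before, to the monotonicity of the auxiliary function $\gamma(r)$ of \eqref{lagama}, now with $A=(x_0-x_1)^2$ the largest and $B=(x_1-x_2)^2$ the smallest of the squared differences, so that the inequality between $A$ and $B$ is the reverse of the one in Lemma \ref{lambda2}; the identity $x_0=-x_1-x_2$ again gives $A\geq 4B$.

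Part \eqref{c-chico1} should then come out verbatim from the computation already done for part \eqref{c-chico} of Lemma \ref{lambda2}: on the range $0<r\leq b\leq 1$ the estimate $(b-1+r)^2<4\left((b-1)^2+b\right)$ together with $A\geq 4B$ pushes $\gamma$ in the direction that makes $\lambda_1$ increase. The main obstacle is part \eqref{c-grande1}. Here the admissible interval is $b\leq r<b/(b-1)$ (forced by $c_{0,1}>0$), and near its upper end the crude bound $A\geq 4B$ is no longer strong enough: the inequality $(b-1+r)^2<4\left((b-1)^2+b\right)$ fails once $r$ is large and $b$ is close to $1$, even though the true ratio $(x_0-x_1)^2/(x_1-x_2)^2$ in fact diverges as $r\to b/(b-1)$. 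Establishing a sharp enough lower bound on that eigenvector ratio is the delicate step, and it is the reason I would not simply copy the earlier proof.

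Rather than chase that estimate, I would finish both parts uniformly by using the exact spectrum. By \eqref{resistencia_global} the constraint $\rho(T)=2$ is equivalent to $c_{0,1}+c_{0,2}+c_{1,2}=c_{0,1}c_{0,2}+c_{0,1}c_{1,2}+c_{0,2}c_{1,2}=:P$. Since the $3$-cycle Laplacian has eigenvalues $0,\lambda_1,\lambda_2$ with $\lambda_1+\lambda_2=2\sum_{i<j}c_{i,j}$ and $\lambda_1\lambda_2=3P$, the normalization $\sum c_{i,j}=P$ gives the closed form
\[
\lambda_1=P-\sqrt{P^2-3P},\qquad \lambda_2=P+\sqrt{P^2-3P}.
\]
A one-line check shows $\lambda_1$ is strictly decreasing in $P$ on $[3,\infty)$, so the whole question becomes the behaviour of $P$ as a function of $r$. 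Substituting $c_{0,1}=(b+r-br)/(b+r-1)$ I obtain $P(r)=(r^2+br+b^2)/(r+b-1)$, whose derivative has numerator $r^2+2r(b-1)-b$. This expression is non-negative for $r\geq b\geq 1$ and non-positive for $0<r\leq b\leq 1$, so $P$ increases in the regime of part \eqref{c-grande1} and decreases in that of part \eqref{c-chico1}; hence $\lambda_1$ decreases and increases respectively, which is the assertion.

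I would close by remarking that this closed form is not merely a patch: it reproves Lemma \ref{lambda2} in the same stroke and collapses the entire content of Theorem \ref{isoperimetric_ineq} to the single symmetric inequality $P\geq 3$ under the side condition $\sum_{i<j}c_{i,j}=P$, with equality forcing all conductances equal. The variational route of Lemma \ref{lambda2} remains the more geometric explanation of \emph{why} the monotonicity holds, but the spectral identity is what lets me discharge part \eqref{c-grande1} cleanly.
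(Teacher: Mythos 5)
Your proof is correct, and it takes a genuinely different route from the paper's. The paper proves this lemma by the same variational perturbation used for Lemma \ref{lambda2}: it characterizes the $\lambda_1$-eigenvector by placing the largest coordinate gap on the smallest conductance, evaluates the energy of that fixed eigenvector at a nearby parameter value, and reduces everything to the monotonicity of the auxiliary function $\gamma$ of \eqref{lagama} with $A=(y_0-y_1)^2\geq 4(y_1-y_2)^2=4B$, citing the computation at the end of the proof of Lemma \ref{lambda2}. You instead compute $\lambda_1$ in closed form: under $\rho(T)=2$ the two symmetric functions coincide, $\sum c_{i,j}=\sum c_{i,j}c_{k,l}=P$, the trace and the sum of principal $2\times 2$ minors give $\lambda_1+\lambda_2=2P$ and $\lambda_1\lambda_2=3P$, so $\lambda_1=P-\sqrt{P^2-3P}=3/\bigl(1+\sqrt{1-3/P}\bigr)$ is strictly decreasing in $P\geq 3$, and the lemma reduces to the sign of $P'(r)$. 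Your formula $P(r)=(r^2+br+b^2)/(r+b-1)$ and the numerator $r^2+2r(b-1)-b$ of its derivative are correct, and the sign analysis in the two regimes is right, so the proof is complete.

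Your criticism of the template in part \eqref{c-grande1} is also well taken, and it in fact applies to the paper's own proof. The paper closes part \eqref{c-grande1} by asserting that $A\geq 4B$ together with ``the last part of the proof of lemma \ref{lambda2}'' makes $\gamma$ decreasing; but that computation only gives $\gamma'<0$ when $(b-1+r)^2<4\bigl((b-1)^2+b\bigr)$, which holds on the range $0<r\leq b\leq 1$ relevant there and fails on the present range $b\leq r<b/(b-1)$ once $r$ is large and $b$ is near $1$ (e.g.\ $b=1.1$, $r=10$). The conclusion is still true because the actual eigenvector ratio $A/B$ blows up as $r\to b/(b-1)$, but the crude bound $A\geq 4B$ cannot see this, so the variational argument as written is incomplete precisely where you say it is. Your closed-form computation bypasses the issue entirely and, as you observe, also yields Lemma \ref{lambda2}, the eigenvalue inequalities of Lemma \ref{dos_iguales}, and Theorem \ref{isoperimetric_ineq} in one stroke, since $\lambda_1\leq 3\leq\lambda_2$ is exactly $P\geq 3$, i.e.\ $(\sum c_{i,j})^2\geq 3\sum c_{i,j}c_{k,l}$, with equality iff all conductances are equal. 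What the paper's route buys in exchange is the explicit monotone deformation toward the symmetric cycle; your route makes that scaffolding unnecessary.
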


 \begin{proof}
  If $u=(y_0,y_1,y_2)$ is an eigenvector for $T$ corresponding to $\lambda_1$, then it minimizes de Schwarz quotient \eqref{schwarz} 
  among all the elements in $\ell(T)$ that are orthogonal to the constant functions. 
  Opposite to the maximum, the minimal energy \eqref{3cycle_energy} will be attained when the largest of the $y_j$ is next to the smallest 
  conductances, and the smallest $y_j$ is next to the largest conductances. Thus, for part \ref{c-grande1}, we can consider that   
  \begin{equation}\label{caracterizar_e1}
  y_0< 0 < y_2\leq y_1 < |y_0| = y_1+y_2\qquad {\rm and}\qquad  y_0^2+y_1^2+y_2^2=1.
  \end{equation}
 
  Proceeding as we did in the proof of lemma \ref{lambda2}, proving that $\lambda_1(r)$ is decreasing for $r\geq b\geq 1$ reduces 
  to verify that 
  \begin{equation*}
   \gamma(r)= \frac{r(b-1)-b}{1-b-r}(y_0-y_1)^2+r(y_1-y_2)^2
  \end{equation*}
  is decreasing. But, since $(y_0-y_1)^2\geq 4(y_1-y_2)^2$, we already have shown (in the last part of the proof of lemma \ref{lambda2}) 
  that such is the case. 
  \medskip 
  
  For part \ref{c-chico1} we can take the eigenvector $(y_0,y_1,y_2)$ to satisfy the conditions \eqref{caracterizar_e1}.  
  The fact that $\lambda_1(r)$ is increasing for $0<r\leq a\leq 1$ follows from 
  $$
  \gamma(r)= \frac{r(b-1)-b}{1-b-r}(y_0-y_1)^2+r(y_1-y_2)^2
  $$
  being increasing, which is satisfied since $(y_0-y_1)^2\leq (y_1-y_2)^2$.
 \end{proof}

 See figures \ref{bgrande} and \ref{bchico} for ilustrations of particular cases of lemmas \ref{lambda2} and \ref{lambda1}. 
 \medskip 
 
 \begin{proof}[Proof of theorem \ref{isoperimetric_ineq}]
 From the observation at the end of section \ref{erm} we have that $\rho (\alpha T)$ is inversely proportional to $\alpha$. Since the 
 eigenvalues $\lambda_j$ are directely proportional to $\alpha$, we have that the products $\rho (\alpha T)\lambda_j$ do not change with $\alpha$.
 In other words, those products depend exclusively on the proportions between the conductances, not on their absolute values. So,
 without loss of generality, we can consider only the case when $\rho(T)=2$.
 
 Given $T_0$ the 3-cycle with conductances equal to 1, we want to conclude that whenever $\rho(T)=2$ the inequalities 
 \begin{equation}\label{result}
  \lambda_1(T)\leq\lambda_1(T_0)=\lambda_2(T_0)\leq\lambda_2(T)
 \end{equation}
    hold. 
    
 It is straightforward to verify that if we leave two of the 
 conductances fixed, then $\rho(T)$ decreases as the third conductance increases. Therefore, in order to satisfy $\rho(T)=2$ it is 
 necessary that, if $a<b<c$ are the conductances, we have either of the two situations 
 \begin{align}
  a&\leq 1\leq b\leq c\label{cond_big}\\
  a&\leq b\leq 1\leq c.\label{cond_small} 
 \end{align}
  Supposse that for $T$ we have the case \eqref{cond_big}. 
  Define $\tilde T$ to be the 3-cycle with conductances $a,b,b$. 
  Then, from part \ref{c-grande} of lemma \ref{lambda2} and part \ref{c-grande1} of lemma \ref{lambda1} it follows that 
  $$
  \lambda_1(T)\leq\lambda_1(\tilde T)\leq\lambda_2(\tilde T)\leq\lambda_2(T).
  $$
  And by lemma \ref{dos_iguales} we know that 
  $$
  \lambda_1(\tilde T)\leq\lambda_1(T_0)=\lambda_2(T_0)\leq\lambda_2(\tilde T),
  $$
  so that we obtain \eqref{result}. If $T$ corresponds to the other case \eqref{cond_small} the result 
  follows in analogous way.
 \end{proof}
 \medskip 
 
  \begin{figure}
  \begin{center}
  \includegraphics[scale=0.59]{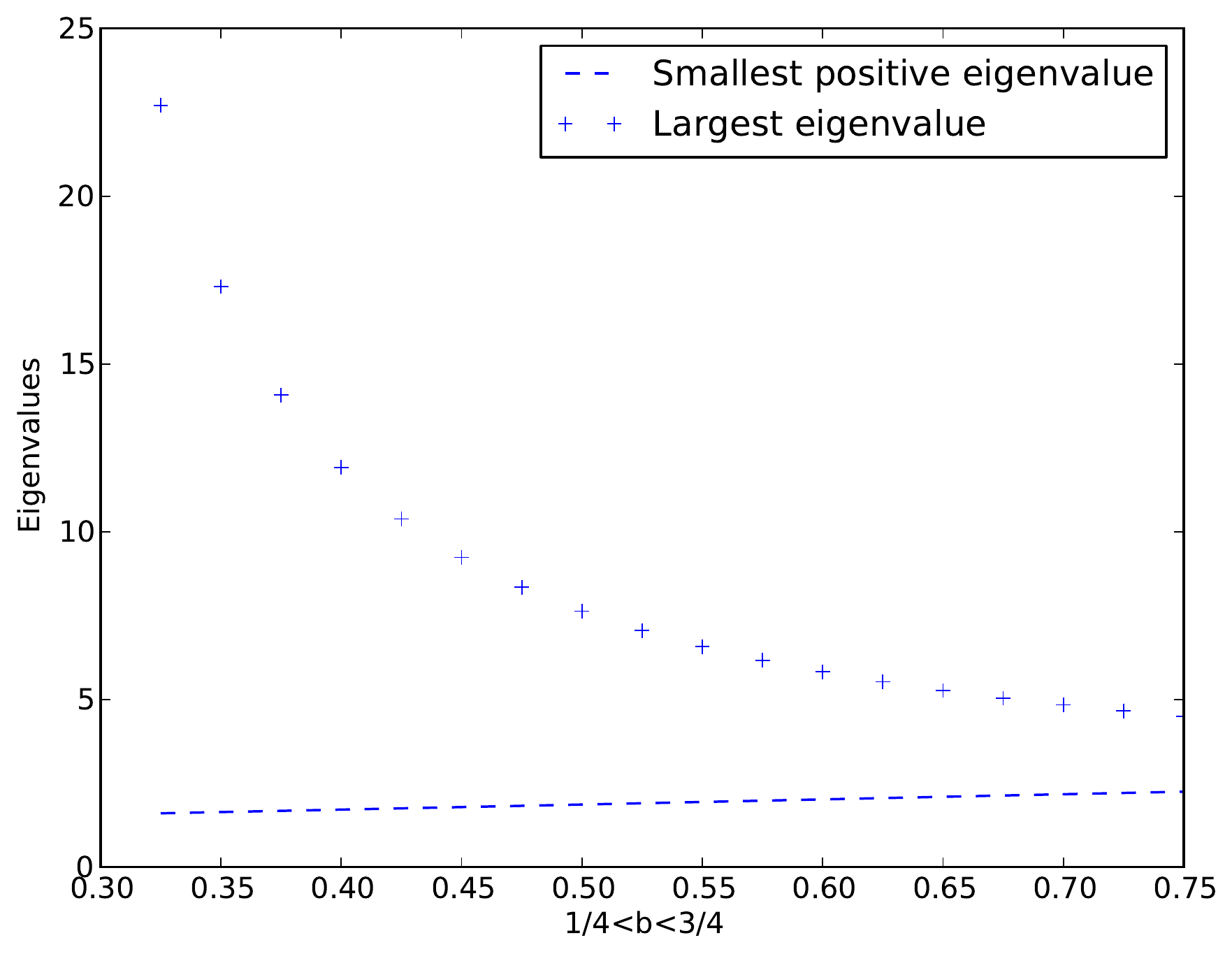}
  \caption{The two positive eigenvalues of the Laplacian of a 3-cycle with conductances equal to $\{3/4,r,(r+3)/(4r-1)\}$.}
 \label{bchico}
 \end{center}
 \end{figure}
 \medskip

\section{Further considerations}\label{fc}

\begin{figure}
 \begin{center}
  \includegraphics[scale=0.59]{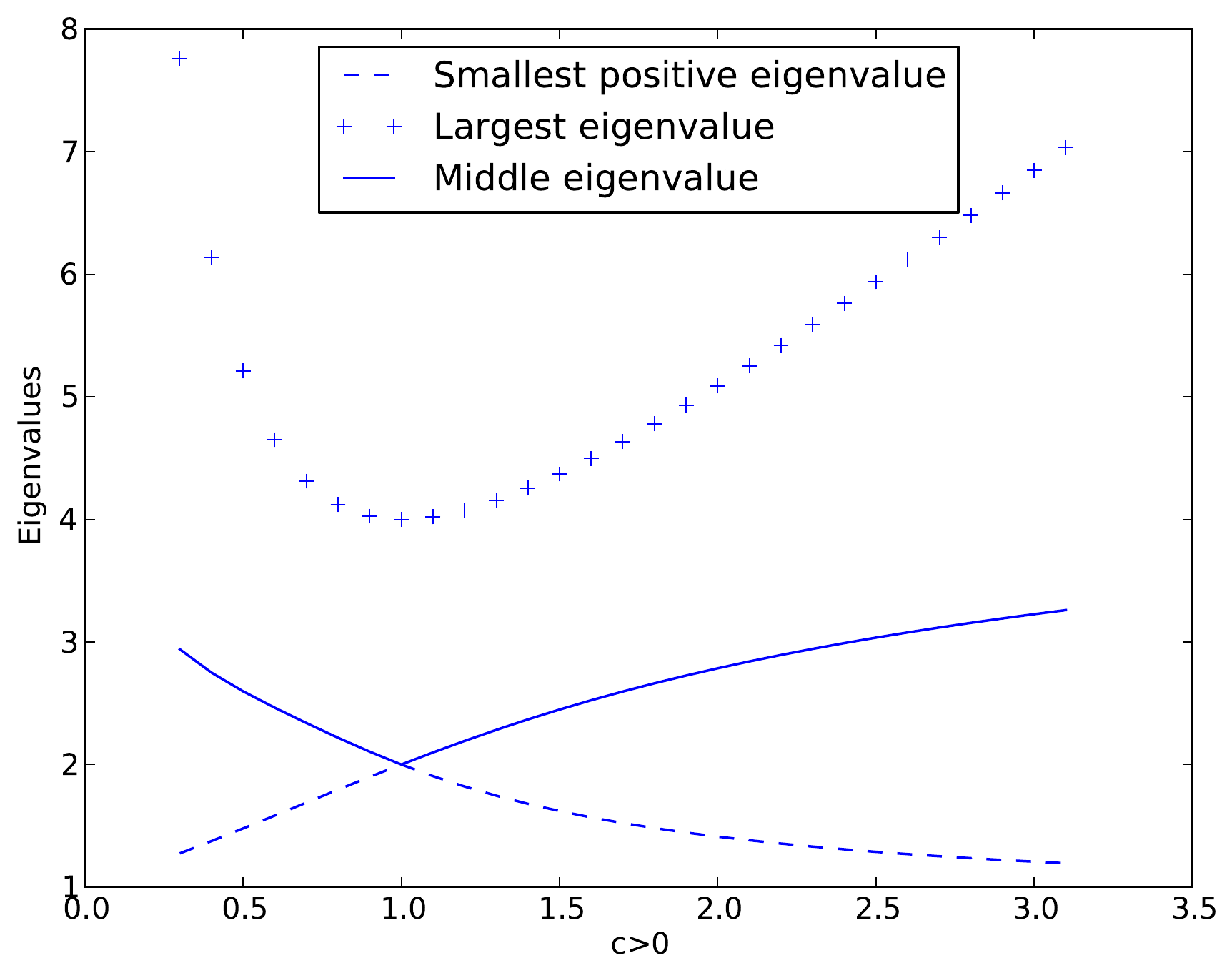}
  \caption{The three positive eigenvalues of the Laplacian of a 4-cycle with conductances $1$, $c$, $1/c$ and $(1+c^2-c)/(1+c^2+c)$.}
 \label{n4_especial1}
 \end{center}
 \end{figure}

It is reasonable to question whether the results presented are also true for the general cycle with $n$ vertices. More precisely one would expect 
that, for every $n$, if $T_0$ is the Laplacian for the $n$-cycle with constant conductances then
\begin{equation}\label{conjetura}
\lambda_1(T)\leq\lambda_1(T_0)\leq\lambda_{n-1}(T_0)\leq\lambda_{n-1}(T_0).  
\end{equation}
for every Laplacian $T$ on the cycle with $\rho(T)=\rho(T_0)$. So far, we have not found a counterexample for that, and the numerical evidence also points 
in that direction. However, it does not seem plausible that the methods used for the case $n=3$ can be adapted in a simple way to 
establish the general case. Most likely, a different approach might be needed to obtain a general proof.
\medskip 

Hereby, we show graphical evidence in two particular situations for the $4$-cycle. 
It is straightforward to calculate the global resistance of the $n$ cycle with conductances $c_{i,j}$ to be equal to
$$
\rho(T)=\frac{2(c_{0,1}c_{1,2}+c_{0,1}c_{2,3}+c_{0,1}c_{0,3}+c_{1,2}c_{2,3}+c_{1,2}c_{0,3}+c_{2,3}c_{0,3})}
{c_{0,1}c_{1,2}c_{2,3}+c_{0,1}c_{1,2}c_{0,3}+c_{1,2}c_{2,3}c_{0,3}+c_{1,2}c_{2,3}c_{0,3}}
$$

This gives $\rho(T_0)=3$. The condition $\rho(T)=3$ determines each conductance as a function of the other three, in particular
$$
c_{0,1}=\frac{3c_{1,2}c_{2,3}c_{0,3}-2(c_{1,2}c_{2,3}+c_{1,2}c_{0,3}+c_{2,3}c_{0,3})}
{2(c_{1,2}+c_{2,3}+c_{0,3})-3(c_{1,2}c_{2,3}+c_{1,2}c_{0,3}+c_{2,3}c_{0,3}))}
$$

For the particular example when shown in the next picture, the behaviour of the eigenvalues is shown in the plot of figure \ref{n4_especial1}.
\medskip 

\begin{center}
    \begin{tikzpicture}[shorten >=1pt,->]
      \tikzstyle{vertex}=[circle,fill=black!25,minimum size=12pt,inner sep=2pt]
	\node[vertex] (v0) at (2,2) {$v_0$};
	\node[vertex] (v1) at (-2,2) {$v_1$};
	\node[vertex] (v2) at (-2,0) {$v_2$};
	\node[vertex] (v3) at (2,0) {$v_3$};
	\draw (v0) -- (v1) -- (v2) -- (v3) --(v0) --  cycle;
	\node (c01) at (-0.05,2.5) {$c_{0,1}=\frac{1+c^2-c}{1+c^2+c}$};
	\node (c12) at (-3.1,1) {$c_{1,2}=1/c$};
	\node (c23) at (0,-0.5) {$c_{2,3}=c$};
	\node (c03) at (3,1) {$c_{2,3}=1$};
    \end{tikzpicture}
\end{center}
\medskip 

The second case considered is:
\medskip 

\begin{center}
    \begin{tikzpicture}[shorten >=1pt,->]
      \tikzstyle{vertex}=[circle,fill=black!25,minimum size=12pt,inner sep=2pt]
	\node[vertex] (v0) at (2,2) {$v_0$};
	\node[vertex] (v1) at (-2,2) {$v_1$};
	\node[vertex] (v2) at (-2,0) {$v_2$};
	\node[vertex] (v3) at (2,0) {$v_3$};
	\draw (v0) -- (v1) -- (v2) -- (v3) --(v0) --  cycle;
	\node (c01) at (-0.05,2.5) {$c_{0,1}$};
	\node (c12) at (-3.1,1) {$c_{1,2}=1/c$};
	\node (c23) at (0,-0.5) {$c_{2,3}=c$};
	\node (c03) at (3.7,1) {$c_{2,3}=(c+1)/2$};
    \end{tikzpicture}
\end{center}
\medskip

 \begin{figure}
 \begin{center}
  \includegraphics[scale=0.59]{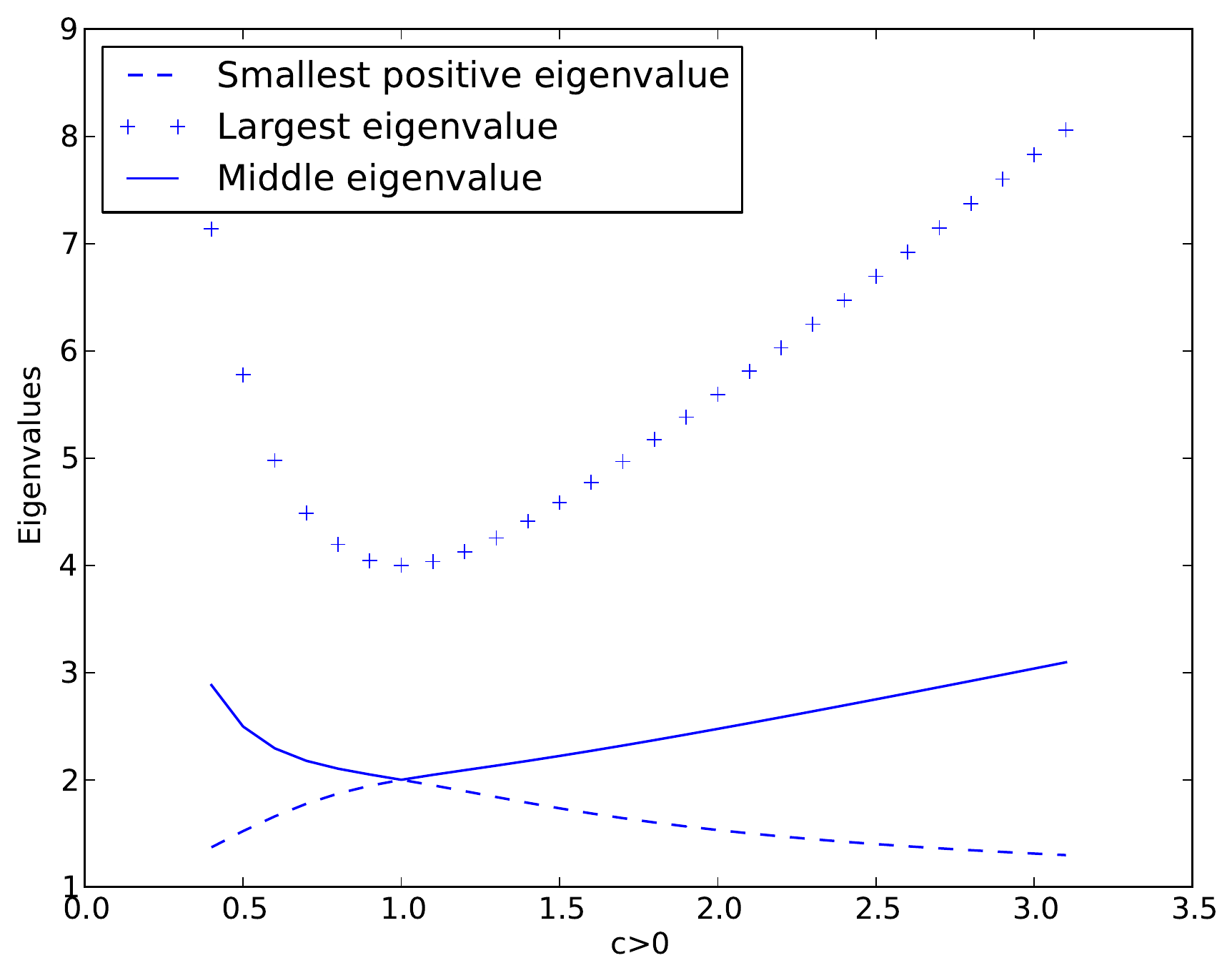}
  \caption{The three positive eigenvalues of the Laplacian of a 4-cycle with conductances $1$, $c$, $1/c$ and $(1+c^2-c)/(1+c^2+c)$.}
 \label{n4_especial2}
 \end{center}
 \end{figure}

The plot of the eigenvalues for this case is shown in figure \ref{n4_especial2}.
\medskip 

As we see, these plots suggest that the eigenvalues have a similar behaviour to the one of the case $n=3$. Namely, that not only \eqref{conjetura}
 might be true, but also that we could expect nice monotonic behaviours similar to the ones described in lemmas \ref{dos_iguales}, \ref{lambda2}
 and \ref{lambda1}.

\newpage
 
\bibliographystyle{plain}

\end{document}